\newtheorem{theorem}{Theorem}[section]
\newtheorem{cor}[theorem]{Corollary}
\newtheorem{ques}[theorem]{Question}
\newtheorem{prp}[theorem]{Proposition}
\newcommand{\R}{\mathbb{R}}
\newcommand{\PP}{\mathcal{P}}
\title{Most graphs are knotted}
\author{Kazuhiro Ichihara}
\address{Department of Mathematics, College of Humanities and Sciences, Nihon University, 3-25-40 Sakurajosui, Setagaya-ku, Tokyo 156-8550, Japan.}
\email{ichihara.kazuhiro@nihon-u.ac.jp}
\thanks{Ichihara is partially supported by Grant-in-Aids for Scientific Research (C) (No. 18K03287), The Ministry of Education, Culture, Sports, Science and Technology, Japan, respectively. }
\author{Thomas W.\ Mattman}
\address{Department of Mathematics and Statistics,
California State University, Chico,
Chico, CA 95929-0525}
\email{TMattman@CSUChico.edu}
\subjclass[2010]{Primary 05C10, Secondary 57M15, 05C35 }
\keywords{spatial graphs, random graphs, intrinsic knotting, intrinsic linking, apex graph}
\begin{document}

\begin{abstract}
We present four models for a random graph and show that, in each case, 
the probability that a graph is intrinsically knotted goes to one as the number of vertices
increases. We also argue that, for $k \geq 18$, most graphs of order $k$
are intrinsically knotted and, for $k \geq 2n+9$, most of order $k$ are not
$n$-apex. 
We observe that $p(n) = 1/n$ is the threshold for intrinsic knotting and linking
in Gilbert's model.
\end{abstract}

\maketitle

\section*{Introduction}
We investigate knotting of random graphs. A graph is {\em intrinsically knotted} (IK), 
if every tame embedding in $\R^3$ contains a non-trivially knotted cycle. For us, graphs are simple (no 
loops or multiple edges) and we identify the combinatorial object with the associated 
$1$-dimensional cell complex.

Below, we list four models for a random graph.
Models 1 and 2 are well-known in graph theory, see \cite{B1} or \cite[Chapter VII]{B2}.
Let $|G| = n$ denote the {\em order} or number of vertices of a graph. Let $N = \binom{n}{2}$
denote the number of edges in $K_n$, the complete graph of order $n$.

\begin{description}
\item[1]
(Erd\H{o}s-R\'enyi~\cite{ER}) Choose a graph $G(n,M)$ uniformly at random from the set of labelled graphs 
with $n$ vertices and $M$ edges. There are $\binom{N}{M}$ such graphs and the probability of 
choosing a particular graph is ${\binom{N}{M}}^{-1}$.
\item[2]
(Gilbert~\cite{G}) For each of the possible $N$ edges, we select it as an edge of the graph $G(n,p)$ independently 
with probability $p$. 
\item[2.5]
If $p = \frac12$ in Gilbert's model, then every one of the $2^N$ labelled graphs on $n$
vertices is equally likely. The probability of choosing a particular labelled graph with $|G| = n$ is then $2^{-N}$.
\item[3]
(Unlabelled version of Model 2.5) 
Let $\Gamma_n$ denote the number of unlabelled graphs on $n$ vertices. Choose a graph
from this set uniformly at random. The probability of choosing a particular unlabelled graph with $|G| = n$ is
$\Gamma_n^{-1}$.
\end{description}

There are two senses to our title. 
In Section~3, we argue that, in all four models, the probability that a graph is intrinsically knotted goes to one as the
number of vertices increases.

In Section~1, we show that there is a constant $n_{IK}$ 
such that, when $n \geq n_{IK}$, most order $n$ graphs in Model 2.5 or 3 are intrinsically knotted. 
We show that $13 \leq n_{IK} \leq 18$, but leave open the question of the exact value of $n_{IK}$.

Section~2 develops the idea of $n$-apex. A graph is $n$-apex if it becomes planar on the deletion
of $n$ or fewer vertices. We'll explain how this relates to intrinsic knotting as well as intrinsic linking; 
a graph is {\em intrinsically linked} (IL) if every tame embedding in $\R^3$ includes a pair of 
non-trivially linked cycles. We conclude the section by showing that when $k \geq 2n+9$, 
most order $k$ graphs in Model 2.5 or 3 are not $n$-apex.

Much of our paper is based on a fundamental observation of Mader~\cite{M}
(see also \cite{CMOPRW}). Let $\|G\|$ denote the {\em size} or number of edges of graph $G$.
Mader showed that if $|G| = n \geq 7$ and $\|G\| \geq 5n-14$, then $G$ has a $K_7$ minor.
Since $K_7$ is IK~\cite{CG}, any graph with a $K_7$ minor is IK.
Thus, a graph of order seven or more with at least $5|G|-14$ edges  is IK.

As a consequence of Robertson, Seymour, and Thomas's~\cite{RST} classification of obstructions for 
linkless embedding, an intrinsically knotted graph is also intrinsically linked. Thus, our main conclusions
about knotting of random graphs carry through to linking. For example, an alternate title for our paper is
`Most Graphs are Knotted and Linked.'

We remark that, in Gilbert's $G(n,p)$ model (Model 2), 
 $p(n) = 1/n$ is the threshold for intrinsic linking and knotting, just as it is for nonplanarity.
This is because Ajtai, Koml\'os, and Szemer\'edi~\cite{AKS} showed that, for any fixed $r$, almost all 
graphs contain a topological $K_r$ when $p(n) = c/n$ for $c > 1$. Nonplanarity, intrinsic linking, 
and intrinsic knotting are guaranteed by a topological $K_r$ with $r = 5,6,7$, respectively.

\section{The complement bound for intrinsic knotting} 

In this section we define a constant $n_{IK}$, which is at most 18, and show that, in Models 2.5 and 3, 
if $n \geq n_{IK}$, most graphs of order $n$ are IK. 

Suppose $\PP$ is a graph property whose negation is closed under taking minors and 
such that $K_n$ has $\PP$ for sufficiently large $n$.
Examples include intrinsic knotting and linking.
Let $n_{\PP}$, the {\em complement bound for $\PP$}, denote the least $n$ such that, 
for every $G$ with $|G| \geq n$, either $G$ or its complement has $\PP$. For example, the complement
bound for nonplanar is $n_{NP} = 9$~\cite{BHK}.

\begin{prp} In Model 2.5 or 3, if $n \geq n_{\PP}$, then most graphs with $|G| = n$ have $\PP$.
\end{prp}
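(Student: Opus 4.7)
The plan is to exploit the complement symmetry built into Models 2.5 and 3. My approach breaks into two parts: a pairing argument showing the probability is at least $\tfrac12$, and a witness construction upgrading this to a strict majority.

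First I would observe that the involution $G \mapsto G^c$ preserves the underlying probability measure in each of the two models: in Model 2.5 each labeled graph has probability $2^{-N}$, which is stable under complementation, and in Model 3 complementation permutes isomorphism classes and preserves the uniform measure $\Gamma_n^{-1}$. Consequently $\Pr[G \text{ has } \PP] = \Pr[G^c \text{ has } \PP]$; call this common value $p$. When $n \ge n_\PP$, the defining property of the complement bound makes the event $\{G \text{ has } \PP\} \cup \{G^c \text{ has } \PP\}$ certain, and inclusion-exclusion yields
\[
1 = 2p - \Pr[G \text{ and } G^c \text{ both have } \PP],
\]
so $p = \tfrac12 + \tfrac12\,\Pr[\text{both have } \PP] \ge \tfrac12$.

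To sharpen ``$\ge \tfrac12$'' to ``most have $\PP$'', I would exhibit a single graph $G$ of order $n$ with both $G$ and $G^c$ in $\PP$, which immediately forces $p > \tfrac12$. Since $\PP$ is closed under adding edges (its negation being minor-closed), it suffices to plant a witness subgraph for $\PP$ inside $G$ and another inside $G^c$. For $\PP = \textrm{IK}$ one can take two copies of $K_7$ sharing at most one vertex, which fits already at $n = 13$; for IL two copies of $K_6$ suffice at $n = 11$; and for the not-$n$-apex property two copies of $K_{n+5}$ sharing one vertex fit inside $2n+9$ vertices.

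The main obstacle is exactly this last step: the pairing argument alone only gives $p \ge \tfrac12$, and the upgrade to strict majority depends on a property-specific witness. In each of the three applications in this paper the required vertex count is automatic from $n \ge n_\PP$, so the verification is a routine count; a purely axiomatic proof applicable to all $\PP$ would need either an extra hypothesis or the comparison $n_\PP \ge 2m-1$, where $K_m$ is the smallest complete graph in $\PP$.
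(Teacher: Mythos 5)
Your first paragraph is precisely the paper's argument, just phrased probabilistically: the paper pairs each order-$n$ graph $G$ with $\overline{G}$, notes that at least one of the two has $\PP$ once $n \geq n_{\PP}$, and observes that a self-complementary graph must itself have $\PP$; this yields ``at least half,'' which is all the paper means by ``most.'' So the core of your proposal is correct and matches the paper. Your second step --- planting complete-graph witnesses in $G$ and $\overline{G}$ to upgrade to a strict majority --- goes beyond what the paper proves. It is valid as stated for the three concrete properties used later (the vertex counts $13$, $11$, $2n+9$ do accommodate two copies of $K_7$, $K_6$, $K_{n+5}$ sharing a vertex, and a single witness has positive probability in both models), but, as you yourself flag, it does not follow from the stated hypotheses on $\PP$ alone. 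The paper sidesteps this entirely by adopting the weak reading of ``most,'' so there is no gap in your argument --- only an optional strengthening whose general validity you correctly identify as conditional.
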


\begin{proof}
Let $n \geq n_p$. Pair off each order $n$ graph $G$ with its complement $\overline{G}$. Since $|G| \geq n_p$
at least one of these two has $\PP$. If $G$ is self complementary, then $G$ must have $\PP$. 
This means at least half the graphs have $\PP$.
\end{proof}

Below, we estimate $n_{IK}$, the complement bound for intrinsic knotting. 
However, we leave open the following question.

\begin{ques} What is the complement bound for intrinsic knotting, i.e., the least $n$ such that,
if $|G| \geq n$, either $G$ or its complement is intrinsically knotted?
\end{ques}

We observe that $13 \leq n_{IK} \leq 18$.
For the lower bound, use the 12 vertex 
self complementary and non IK graph described in \cite{PP}.
On the other hand, since $\binom{18}{2} = 153$,
if $|G| = 18$, then either $G$ or its complement has at least 77 edges. But, since $5(18)-14 = 76$,
by Mader's result, that graph is IK.

Recently, the Pavelescu's~\cite{PP2} investigated the complement bound for intrinsic linking
and showed $11 \leq n_{IL} \leq 13$.

\begin{cor} In Model 2.5 or 3, if $n \geq 18$, then most graphs of order $n$ are intrinsically knotted.
If $n \geq 13$, then most graphs of order $n$ are intrinsically linked.
\end{cor}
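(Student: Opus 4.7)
The plan is to apply the Proposition twice, with $\PP$ equal to intrinsic knotting and then intrinsic linking, drawing on the upper bounds $n_{IK} \leq 18$ and $n_{IL} \leq 13$ just recorded in the text.

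First I would verify that both properties satisfy the Proposition's hypotheses. The class of non-IK graphs is closed under taking minors: if $H$ is a minor of $G$ and $H$ is IK, then any tame embedding of $G$ in $\R^3$ descends, via edge contractions and deletions, to a tame embedding of $H$, which must contain a knotted cycle, and such a cycle lifts back to a knotted cycle in $G$. Thus the negation of IK is minor-closed. Moreover, by Conway--Gordon, $K_n$ is IK for every $n \geq 7$, so the ``sufficiently large'' condition holds. The identical argument works for IL, with $K_6$ in place of $K_7$ (using that $K_6$ is IL, again by Conway--Gordon).

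With the hypotheses in place, the corollary follows mechanically. Since the excerpt established $n_{IK} \leq 18$ via Mader's bound, taking $n \geq 18 \geq n_{IK}$ and invoking the Proposition yields that most order $n$ graphs in Model 2.5 or 3 are IK. Similarly, taking $n \geq 13 \geq n_{IL}$ (using the Pavelescus' bound) and invoking the Proposition yields the linking statement.

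There is no real obstacle here: the corollary is essentially a bookkeeping consequence of the Proposition together with the already-stated complement bounds. The only content one needs to be careful about is the minor-closed hypothesis, and even that is a standard fact from the theory of spatial graphs.
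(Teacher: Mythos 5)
Your proposal is correct and follows exactly the route the paper intends: the corollary is an immediate application of the Proposition with $\PP$ taken to be intrinsic knotting (using $n_{IK}\leq 18$ from Mader's edge bound) and then intrinsic linking (using the Pavelescus' bound $n_{IL}\leq 13$), and your verification that the negations of these properties are minor-closed and that large complete graphs have them matches the paper's stated justification for why IK and IL qualify as properties $\PP$.
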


\section{The $n$-apex property}

In this section we relate the $n$-apex property to intrinsic knotting and linking and 
show that the complement bound for the property not $n$-apex is at most $2n+9$.

We may view intrinsic knotting and linking as generalizations of the nonplanar property due to the connection
with complete minors. As mentioned in the introduction, nonplanarity, intrinsic linking, 
and intrinsic knotting are guaranteed by a $K_r$ minor with $r = 5,6,7$, respectively.
Thus, our study of $n_{IK}$, and the Pavelescu's of $n_{IL}$, can be seen as a generalization of
the proof of Battle, Harary, and Kodama~\cite{BHK} that $n_{NP} = 9$.

The $n$-apex property is a wider ranging generalization along the same lines: a graph with 
a $K_{n+5}$ minor is not $n$-apex. We can even equate planarity with $0$-apex.
Below we parlay the observation that $n_{NP} = 9$ into a natural generalization, $n_{NnA} \leq 2n+9$.

This suggests that the $1$-apex property, which is also called, simply, apex, is related to IL and 
$2$-apex to IK. Indeed, Sachs~\cite{S} observed that an IL graph is not apex and two groups~\cite{BBFFHL, OT} independently
established that an IK graph is not $2$-apex. Earlier we used the order 12 self complementary graph of \cite{PP} to see that
$n_{IK} \geq 13$. This graph is not IK because it is $2$-apex. The same graph, therefore, also establishes that $n_{N2A} \geq 13$. 
Similarly, to argue that $n_{IL} \geq 11$, the Pavelescu's employed an order 10 graph $G$ such that both $G$ and its complement
are apex. Thus, this same graph also shows $n_{NA} \geq 11$, where $n_{NA}$ is the complement bound for not apex. Combined
with the following proposition, this shows $n_{NA} = 11$ and $n_{N2A} = 13$, which makes for a nice continuation of
the planar, or $0$-apex, value, $n_{NP} = 9$.

\begin{prp} 
Let $n_{NnA}$ denote the complement bound for not $n$-apex. Then $n_{NnA} \leq 2n+9$.
\end{prp}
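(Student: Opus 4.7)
The plan is to run a straightforward contrapositive/contradiction argument that leverages the known value $n_{NP} = 9$ of Battle--Harary--Kodama. The key observation is that both the $n$-apex property and taking complements interact nicely with vertex deletion: deleting a vertex set $U$ from $\overline{G}$ gives exactly $\overline{G-U}$, and deleting additional vertices from a planar graph keeps it planar.

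Concretely, I would suppose for contradiction that $|G| = k \geq 2n+9$ but that both $G$ and $\overline{G}$ are $n$-apex. Then there exist vertex sets $S, T \subseteq V(G)$ with $|S|, |T| \leq n$ such that $G - S$ is planar and $\overline{G} - T$ is planar.

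Next I would set $U = S \cup T$, so $|U| \leq 2n$, and consider $H = G - U$, which has at least $k - 2n \geq 9$ vertices. Because $H$ is obtained from the planar graph $G - S$ by deleting the additional vertices $U \setminus S$, it is planar. Likewise $\overline{H} = \overline{G - U} = \overline{G} - U$ is obtained from $\overline{G} - T$ by deleting $U \setminus T$, so it too is planar. This contradicts $n_{NP} = 9$, which guarantees that a graph on at least $9$ vertices cannot be planar simultaneously with its complement.

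There is no real obstacle here; the whole argument is a two-step reduction that uses (a) the fact that a $K_{n+5}$ minor is not essential to the statement (we only need monotonicity of planarity under vertex deletion and the behavior of complements), and (b) the already-established complement bound for planarity. The only thing to be careful about is bookkeeping: making sure that the same vertex set $U$ works for both $G$ and $\overline{G}$, which follows from the identity $\overline{G} - U = \overline{G - U}$.
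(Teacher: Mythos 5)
Your proposal is correct and is essentially identical to the paper's proof: both delete the union of the two apex sets, observe that the remaining induced subgraph $H$ has at least $9$ vertices with both $H$ and $\overline{H}$ planar, and invoke $n_{NP}=9$ for the contradiction. Your version just spells out more explicitly the monotonicity of planarity under vertex deletion and the identity $\overline{G}-U=\overline{G-U}$, which the paper leaves implicit.
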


\begin{proof}
Let $|G| \geq 2n+9$. We will argue $G$ or its complement is not $n$-apex. For a contradiction, suppose both $G$ and 
$\overline{G}$ are $n$-apex. Let $A \subset V(G)$ be an apex set for $G$ and $B$ an apex set for $\overline{G}$.
That is, the induced graph on $V(G) \setminus A$ is planar and similarly for $B$. Let $H$ be the induced subgraph on 
$V(G) \setminus (A \cup B)$. As both $G$ and $\overline{G}$ are $n$-apex, $|H| \geq 9$ and both $H$ and $\overline{H}$
are planar. This contradicts $n_{NP} = 9$.
\end{proof}

\begin{cor} In Model 2.5 or 3, if $k \geq 2n+9$, then most graphs of order $k$ are not $n$-apex.
\end{cor}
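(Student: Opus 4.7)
The plan is to obtain the Corollary as an immediate consequence of the two Propositions already in the paper: the general complement-bound statement (``if $n \geq n_\PP$, then most graphs with $|G|=n$ have $\PP$'' in Models~2.5 and~3) applied to the property $\PP = \text{not $n$-apex}$, combined with the bound $n_{NnA} \leq 2n+9$ just proved. So the work consists entirely of verifying that $\PP = \text{not $n$-apex}$ is an admissible property for the earlier Proposition and then quoting both results.

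First I would check the hypotheses of the general Proposition for $\PP = \text{not $n$-apex}$. The negation is ``$n$-apex'', and I need that $n$-apex is closed under taking minors. This is standard: if $G$ is $n$-apex with apex set $A$, then deleting an edge or contracting an edge of $G$ produces a graph in which the image of $A$ (with repetitions removed after contraction) is still an apex set of size at most $n$, because $G \setminus A$ is planar and planarity is closed under edge deletion and contraction; deleting a vertex outside $A$ only shrinks $G\setminus A$, and deleting a vertex of $A$ leaves $A$ (minus that vertex) as a set of size $\leq n-1$ whose removal still yields a planar graph. So $n$-apex is minor-closed. Second, I need $K_m$ to have $\PP$ for all sufficiently large $m$: removing $n$ vertices from $K_m$ leaves $K_{m-n}$, which is nonplanar as soon as $m - n \geq 5$, so $K_m$ is not $n$-apex whenever $m \geq n+5$.

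Once these two conditions are in hand, the earlier Proposition applies to $\PP = \text{not $n$-apex}$ and says: in Models~2.5 and~3, if $k \geq n_{NnA}$, then most graphs of order $k$ are not $n$-apex. By the Proposition immediately preceding the Corollary, $n_{NnA} \leq 2n+9$. Hence any $k \geq 2n+9$ satisfies $k \geq n_{NnA}$, and the conclusion follows.

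I do not expect any genuine obstacle here; the whole point of having set up $n_\PP$ abstractly and then bounded $n_{NnA}$ by $2n+9$ is that the Corollary should drop out in two lines. The only thing worth stating carefully is the minor-closure of $n$-apex, since that is the one hypothesis of the abstract Proposition that wasn't verified in the earlier intrinsic-knotting/linking applications (where it was taken as known). Everything else is direct quotation.
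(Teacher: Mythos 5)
Your proposal is correct and is essentially the paper's (implicit) argument: the corollary is stated without proof precisely because it follows at once from the general complement-bound Proposition applied to $\PP = \mbox{not $n$-apex}$ together with the bound $n_{NnA} \leq 2n+9$ from the preceding Proposition. Your extra verification that $n$-apex is minor-closed is sound and harmless, though not strictly needed, since the proof of the general Proposition uses only the defining property of $n_{\PP}$ (for every $G$ of order at least $n_{\PP}$, $G$ or $\overline{G}$ has $\PP$), which here is exactly what $n_{NnA} \leq 2n+9$ provides.
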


In light of the corollary, we can also say, `Most graphs are not $n$-apex.'

\section{A graph is knotted with probability one.}

In this section, we show that the probability that a graph is intrinsically knotted goes to one with increasing graph order.

Assume $0<p \leq 1$ in Gilbert's model, Model 2, and let
$|G| = n \geq 7$. Using Mader's observation, $G$ is IK if $\|G\| \geq 5n-14$. So the probability that a graph 
is not IK is bounded by the probability that it has at most $5n-15$ edges:
$$\mbox{Prob} (G \mbox{ not IK}) \leq \mbox{Prob} (\|G \| \leq 5n-15) =
 \sum_{k=0}^{5n-15} \binom{N}{k}p^k(1-p)^{N-k} \leq e^{-2t^2N}.$$
The last inequality is due to Hoeffding~\cite{H}, with  $t = p - (5n-15)/N$,
and shows that the probability approaches zero as $n$ goes to infinity.

In particular, the probability a graph is not IK goes to $0$ with $n$ in 
Model 2.5, when $p = \frac12$. Model 1 behaves similarly if we assume that
there is a positive proportion of edges, $M/N = p > 0$; see~\cite[Chapter VII, Theorem 6]{B2}, for example.

For Model 3, assume $|G| = n \geq 7$ and let $r = 5n-15$. By Mader's result, a graph that is not IK
has $\|G\| \leq r$. Let $q = \lfloor N/2 \rfloor$ and assume further
that $n \geq 18$ so that $r < q$.
The number of unlabelled graphs with $\|G\| = k$ 
is bounded above by the corresponding number 
of labelled graphs, $\binom{N}{k}$. So, the number of 
non IK graphs in Model 3 is bounded above by
$$\sum_{k=0}^{r} \binom{N}{k} < (r+1) \binom{N}{r}.$$

Since $q > r$, the unlabelled graphs of size $q$ are IK. 
On the other hand, Wright~\cite{W} showed that the number of such graphs is asymptotic
to $\binom{N}{q}/n!$. Therefore, as the order, $n$, goes to infinity, the proportion of graphs in Model 3 
that are not IK is bounded above by
$$\frac{(r+1) \binom{N}{r}}{\binom{N}{q}/n!} = (r+1)n! \frac{q(q-1)(q-2) \cdots (r+1)}{(N-r)(N-r-1) \cdots (N-q+1)} < (r+1)n! \left( \frac{q}{N-r} \right)^{q-r}.$$

Next, assume $n > 105$ so that $q-r > n^2/5$ and $q/(N-r) < 3/5$. This means,
$$\left( \frac{q}{N-r} \right)^{q-r} < \left( \frac35 \right)^{n^2/5} < \left( \frac{1}{(5/3)^{1/5}} \right)^{n^2}  < \frac{1}{(1.1)^{n^2}}.$$

Thus, in the limit as the number of vertices $n$ goes to infinity, the proportion of non IK graphs is bounded above by

$$\frac{ (r+1) n! }{ ( (1.1)^n )^{n} } < \frac{r+1}{n} \cdot \left( \frac{ n }{ (1.1)^n } \right)^n, $$
which goes to 0, as required.

\end{document}